\def\diam#1{\langle#1\rangle}
\newcommand{\R}{\mathbb{R}}
\newcommand{\Q}{\mathbb{Q}}
\newcommand{\Z}{\mathbb{Z}}
\newcommand{\F}{\mathbb{F}}
\begin{document}
\title{Exceptional elliptic curves over quartic fields}
\author{Filip Najman}
\date{}
\maketitle
\begin{abstract}
We study the number of elliptic curves, up to isomorphism, over a fixed quartic field $K$ having a prescribed torsion group $T$ as a subgroup. Let $T=\Z/m\Z \oplus \Z/n\Z$, where $m|n$, be a torsion group such that the modular curve $X_1(m,n)$ is an elliptic curve. Let $K$ be a number field such that there is a positive and finite number of elliptic curves $E$ over $K$ having $T$ as a subgroup. We call such pairs $(T, K)$ \emph{exceptional}. It is known that there are only finitely many exceptional pairs when $K$ varies through all quadratic or cubic fields. We prove that when $K$ varies through all quartic fields, there exist infinitely many exceptional pairs when $T=\Z/14\Z$ or $\Z/15\Z$ and finitely many otherwise.
\end{abstract}
\vspace{1cm}
\textbf{Keywords} Torsion Group, Elliptic Curves, 2-descent\\
\textbf{Mathematics Subject Classification} (2010) 11G05, 11G18, 11R16, 14H52

\section{Introduction}

The aim of this paper is to study the number of elliptic curves over a fixed quartic field having a given torsion group $T$ as a subgroup. We do not require $T$ to be the full torsion subgroup. When counting elliptic curves, we always mean up to $\overline K$-isomorphism, and we do this without mention throughout this paper.

 First we recall what is known about torsion of elliptic curves over quartic number fields. It was shown in \cite[Theorem 3.6.]{jkp} that the if $K$ varies over all quartic fields and $E$ varies over all elliptic curves over $K$ the groups that appear  infinitely often as $E(K)_{tors}$ are exactly the following:
$$\Z /N_1 \Z,\ \ N_1=1,\ldots,18,20,21,22,24,$$
$$\Z /2\Z \oplus \Z /2N_2 \Z,\ \ N_2=1,\ldots,9,$$
$$\Z /3\Z \oplus \Z /3N_3 \Z,\ \ N_3=1,2,3,$$
$$\Z /4\Z \oplus \Z /4N_4 \Z,\ \ N_4=1,2,$$
$$\Z /N_5\Z \oplus \Z /N_5\Z,\ \ N_5=5,6.$$
An infinite family of elliptic curves over quartic fields with each of these torsion groups is constructed in \cite{jky4}. It is not known whether an elliptic curve over a quartic field can have any torsion group apart from the ones listed above. Kamienny, Stein and Stoll\footnote{see Michael Stoll's notes of a talk given at ANTS IX: \url{http://www.mathe2.uni-bayreuth.de/stoll/talks/ANTS2010-1-EllTorsion.pdf}} have announced the proof that if an elliptic curve over a quartic field has a point of prime order $p$, then $p\leq 17$.

Let $K$ be a number field and let $m$ divide $n$. Denote by $Y_1(m,n)$ the affine modular curve whose $K$-rational points classify isomorphism classes of the triples $(E, P_m, P_n)$, where $E$ is an elliptic curve (over $K$) and $P_m$ and $P_n$ are torsion points (over $K$) which generate a subgroup isomorphic to $\Z/m \Z \oplus \Z/n \Z$. For simplicity, we write $Y_1(n)$ instead of $Y_1(1,n)$. Let $X_1(m,n)$ be the compactification of the curve $Y_1(m,n)$ obtained by adjoining its cusps. Note that $X_1(m,n)$ is defined over $\Q(\zeta_m)$, where $\zeta_m$ is the $m$-th primitive root of unity.

By Mazur's torsion theorem \cite{maz1,maz2}, all the possible torsion groups of an elliptic curve over $\Q$ are known. There are $15$ of them, and for all torsion groups $\Z/m\Z \oplus \Z/n\Z$, the corresponding modular curves $X_1(m,n)$ are curves of genus 0. This makes the torsion groups from Mazur's theorem uninteresting to us, since there are infinitely many curves containing any of these torsions over any number field. If $X_1(m,n)$ is of genus 0, but such that $\Z/m\Z \oplus \Z/n\Z$ does not appear as torsion over $\Q$, then it appears 0 or infinitely many times, and it is not hard to determine (by the Hasse principle) which of these cases actually happens.

On the other hand, for torsion groups $T=\Z/m \Z \oplus \Z/n \Z$, such that $X_1(m,n)$ is a curve of genus $\geq 2$,  there are only finitely many curves containing $T$ over any number field $K$. This is a trivial consequence of Faltings' theorem \cite{fal}, since $X_1(m,n)(K)$ has only finitely many points.

This leaves the, for our purposes, interesting cases when $X_1(m,n)$ is an elliptic curve. One can see for example in \cite[Theorem 2.6, Proposition 2.7]{jkp} that this is the case only for $(m,n)=(1,11),\ (1,14),\ (1,15),\ (2,10),\ (2,12),\ (3,9),\ (4,8)$ or $(6,6)$. We denote the set of these torsion groups by $S$.

Let $K$ be a number field such that there is a positive and finite number of elliptic curves $E$ over $K$ containing $T$, where $T\in S$. We call the pairs $(T, K)$ \emph{exceptional}.

Note that an exceptional pair exists, for a given torsion $T=\Z/m \Z \oplus \Z/n \Z \in S$, over a number field $K\supset\Q(\zeta_m)$ if and only if the following conditions hold:
\begin{enumerate}

\item $rank(X_1(m,n)(K))=0$.

\item At least one of the torsion points of $X_1(m,n)(K)$ is not a cusp (note that this implies $|X_1(m,n)(K)|_{tors}>|X_1(m,n)(\Q(\zeta_m))|_{tors}$).
\end{enumerate}

The main results of this paper can be summarized in the following theorem.

\newtheorem{tm}{Theorem}
\begin{tm}
\label{gltm}
\begin{enumerate}
\item Let $T=\Z/14\Z$ or $\Z/15\Z$. There exist infinitely many quartic fields $K$ over which there exists a positive finite number of elliptic curves $E$ such that $T$ is a subgroup of $E(K)$. There are also infinitely many quartic fields  $K$ over which there are infinitely many elliptic curves $E$ with $T\subset E(K)$.
\item Let $T=\Z/11\Z$, $\Z/2\Z\oplus\Z/12\Z$, $\Z/3\Z\oplus\Z/9\Z$, $\Z/4\Z\oplus\Z/8\Z$, $\Z/6\Z\oplus\Z/6\Z$ or a torsion group parameterized by a modular curve of genus 0. Over every quartic field such that there exists one elliptic curve containing $T$, there exist infinitely many.
\item There is exactly one elliptic curve containing $\Z/2\Z\oplus\Z/10\Z$ over $\Q(\sqrt{2i+1})$. Over all other quartic fields there is either 0 or infinitely many elliptic curves containing $\Z/2\Z\oplus\Z/10\Z$.
\item For all other torsion groups $T$ and for all quartic fields $K$, there are finitely many (maybe none) elliptic containing $T$ over $K$.
\end{enumerate}
\end{tm}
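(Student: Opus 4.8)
\medskip\noindent
The plan is to reduce everything, for a torsion group $T=\Z/m\Z\oplus\Z/n\Z$, to the arithmetic of the modular curve $X_1(m,n)$ over $F_0:=\Q(\zeta_m)$, using the dictionary recalled before the statement. Since $T\subseteq E(K)$ forces $\zeta_m\in K$, and since for a fixed $E/K$ there are only finitely many (indeed a number bounded in terms of $n$) pairs $(P_m,P_n)$ generating a copy of $T$ inside $E(K)$, the number of elliptic curves over a quartic field $K\supseteq F_0$ containing $T$ is finite and positive exactly when $\mathrm{rank}\,X_1(m,n)(K)=0$ and $X_1(m,n)(K)$ contains a non-cuspidal point (when $g(X_1(m,n))=1$ the rational cusps form a subgroup of the torsion by Manin--Drinfeld, so this is the statement that the torsion grows beyond the cuspidal subgroup while the rank does not). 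Part~(4) is then immediate: the $T$ not covered in (1)--(3) and not of genus $0$ (that case being treated in (2)) are precisely those with $g(X_1(m,n))\ge 2$, and Faltings' theorem makes $X_1(m,n)(K)$, hence the set of elliptic curves over $K$ containing $T$, finite for every number field $K$.

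For Part~(2), if $g(X_1(m,n))=0$ and even one elliptic curve over a quartic field $K$ contains $T$, then $X_1(m,n)(K)\ne\emptyset$, so the smooth conic $X_1(m,n)$ is $K$-isomorphic to $\mathbb{P}^1_K$ and carries infinitely many $K$-points, all but finitely many non-cuspidal, hence infinitely many curves. For the five curves $X_1(11)$, $X_1(2,12)$, $X_1(3,9)$, $X_1(4,8)$, $X_1(6,6)$, which are elliptic curves over $\Q$, $\Q$, $\Q(\zeta_3)$, $\Q(i)$, $\Q(\zeta_6)$ respectively, each of rank $0$ over its base field and with only cuspidal rational points, I would show that over every quartic $K$ the group $X_1(m,n)(K)_{tors}$ still consists of cusps only. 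Since $[K:F_0]\in\{2,4\}$, this is a finite verification: for each prime $p$ permitted by the classification of torsion over quartic fields one rules out a new $p$-torsion point over $K$ using the mod-$p$ Galois image (large for these curves, e.g. because their isogeny classes contain no $p$-isogeny), the $\zeta_p$-obstruction and $[K:\Q]=4$; and one checks that any $2$- or $3$-power torsion point that can become rational is a cusp. Granting this, rank $0$ over $K$ forces $X_1(m,n)(K)$ to equal its cuspidal subgroup, so the number of curves is $0$; hence as soon as one elliptic curve containing $T$ exists over $K$, the rank is positive and there are infinitely many.

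Part~(3) runs the same argument one notch further for $X_1(2,10)/\Q$, an elliptic curve of rank $0$ with only cuspidal $\Q$-points. I would enumerate the finitely many quartic fields $K$ over which $X_1(2,10)(K)$ can acquire a non-cuspidal torsion point — from the torsion groups over quartic fields containing the cuspidal subgroup, the fields of definition of the associated division points, and the quadratic fields over which $\Z/2\Z\oplus\Z/10\Z$ already occurs — and then compute $\mathrm{rank}\,X_1(2,10)(K)$ for each candidate by $2$-descent. The assertion to be checked is that $K=\Q(\sqrt{2i+1})$ is the only candidate of rank $0$, and that over it the non-cuspidal torsion of $X_1(2,10)$ corresponds to exactly one elliptic curve; for every other quartic $K$ one has either no non-cuspidal $K$-point (so $0$ curves) or positive rank (so infinitely many).

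Part~(1) is the heart of the matter. Since $X_1(14)$ and $X_1(15)$ are elliptic curves over $\Q$ of rank $0$ with only cuspidal $\Q$-points, ``infinitely many quartic $K$ with infinitely many curves'' follows once one knows that each of them acquires positive rank over infinitely many quartic fields — which holds because an elliptic curve over $\Q$ has positive rank over infinitely many quadratic fields (e.g. via the fibres of its $x$-coordinate map), hence over infinitely many quartic fields, over each of which the Mordell--Weil group is infinite and so contains infinitely many non-cuspidal points. For ``infinitely many quartic $K$ with a positive finite number of curves'' I would first exhibit a quadratic field $L$ over which $X_1(14)$ (resp. $X_1(15)$) has rank $0$ and a non-cuspidal point; such an $L$ is found among the known quadratic fields supporting $\Z/14\Z$-torsion (resp. $\Z/15\Z$-torsion), after confirming the rank by $2$-descent. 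One then lets $K$ range over the infinitely many quartic fields containing $L$ over which $X_1(14)$ (resp. $X_1(15)$) still has rank $0$: each such $K$ keeps the non-cuspidal point of $X_1(14)(L)$ but has finite Mordell--Weil group, so it supports a positive finite number of curves. The main obstacle is exactly this last point — securing infinitely many rank-$0$ quartic fields over $L$, equivalently that a rank-$0$ elliptic curve over $L$ has rank-$0$ quadratic twists for infinitely many twist classes (for biquadratic $K\supseteq L$, two linked quadratic twist families over $\Q$ both of rank $0$ infinitely often) — which needs the analytic theory of quadratic twists (non-vanishing of twisted $L$-values together with Kolyvagin's theorem in analytic rank $0$). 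That, together with the cusp-versus-non-cusp bookkeeping on $X_1(14)$, $X_1(15)$ and $X_1(2,10)$ and the numerous $2$-descent rank computations over quartic fields, is where the real effort goes.
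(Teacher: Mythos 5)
Your global reduction (count curves over $K$ via the rank and non-cuspidal torsion of $X_1(m,n)(K)$), your treatment of the genus~$0$ and genus~$\ge 2$ cases, and your plan for parts (2) and (3) (division polynomials / Galois images to show the torsion over quartic fields stays cuspidal, plus a descent over $\Q(\sqrt{2i+1})$) all match the paper. Two caveats there: for the CM curves $X_1(3,9)$, $X_1(4,8)$, $X_1(6,6)$ your justification ``the mod-$p$ Galois image is large, e.g.\ because their isogeny classes contain no $p$-isogeny'' is wrong --- these curves have CM, so the mod-$p$ image is non-surjective for infinitely many $p$; the paper instead bounds the exponent of the torsion over quadratic extensions of the CM field by the Prasad--Yogananda/Silverberg inequality $\phi(e)\le 2w$ and then only has to factor finitely many division polynomials. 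The finite verification you want is still available, but not for the reason you give.

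The genuine gap is in part (1), at exactly the step you flag as the main obstacle. Writing $K=\Q(\sqrt{d_1},\sqrt{d_2})$ and using $\operatorname{rank}X_1(n)(K)=\sum_{d\in\{1,d_1,d_2,d_1d_2\}}\operatorname{rank}X_1^{(d)}(n)(\Q)$, what you need is infinitely many $d$ with \emph{both} $\operatorname{rank}X_1^{(d)}(14)(\Q)=0$ and $\operatorname{rank}X_1^{(-7d)}(14)(\Q)=0$ (similarly for $15$). You propose to get this from non-vanishing of twisted $L$-values plus Kolyvagin, but the standard non-vanishing theorems give a positive (or at least $\gg X/\log X$) density of good twists for \emph{one} curve; here you need \emph{simultaneous} non-vanishing for the two correlated families $L(X_1^{(d)}(14),1)$ and $L(X_1^{(-7d)}(14),1)$ as $d$ varies, which is not an off-the-shelf citation and is not established in your sketch. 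The paper avoids this entirely by an elementary, unconditional argument: for primes $p\equiv 3\pmod 8$ with $\left(\frac{p}{7}\right)=1$ it computes the Selmer groups of the $2$-isogenies on both twists $X_1^{(p)}(14)$ and $X_1^{(-7p)}(14)$ explicitly (local solvability of the homogeneous spaces in these congruence classes), concluding rank $0$ for both, and then invokes Dirichlet's theorem on primes in arithmetic progressions; the analogous congruence conditions $p\equiv 7\pmod 8$, $p\equiv 2\pmod 3$, $\left(\frac{p}{5}\right)=-1$ handle $X_1(15)$ over $\Q(\sqrt{-15},\sqrt{p})$. To close your argument you should either prove the simultaneous non-vanishing statement you are invoking or replace it with such an explicit descent in a congruence class of twists.
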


\textbf{Remark 1.} Note that the first case in Theorem \ref{gltm} does not happen over quadratic and cubic fields, while all the others do.

\textbf{Remark 2.} It is not hard to work out, by using the results from this paper and invoking \cite[Theorem 3.6.]{jkp}, that if one would want to count, for all possible $T$, the number of elliptic curves having $T$ as the full torsion (not just as a subgroup), that all one would have to do is to determine whether each of the groups from Mazur's theorem appears over every quartic field as the full torsion group for infinitely many curves (we certainly believe this to be true). Note that the fact that there are infinitely many elliptic curves with $T$ as its full torsion group, for every $T$ from Mazur's theorem, was proved for cubic fields in \cite{naj}.\\

Before proving the statements of Theorem \ref{gltm}, in Section \ref{sec2} we briefly recall results about exceptional pairs over quadratic and cubic fields and show that their finiteness should be of no surprise.

\section{Exceptional pairs over quadratic and cubic fields}
\label{sec2}

It is known that there are only 3 exceptional pairs over all quadratic fields. These are the pairs $(\Z/14\Z, \Q(\sqrt{-7}))$, $(\Z/15 \Z, \Q(\sqrt{5}))$ and $(\Z/15 \Z, \Q(\sqrt{-15}))$. There are exactly two elliptic curves over $\Q(\sqrt{-7})$ with torsion $\Z/14\Z$ and one curve with torsion $\Z/15 \Z$ over each of the fields $\Q(\sqrt{5})$ and $\Q(\sqrt{-15})$. The proofs and explicit models of the curves in question can be found in \cite{kn}.

Similarly, there exist only two exceptional pairs over cubic fields, $(\Z/11\Z,K_1)$ and $(\Z/14\Z,K_2)$, where the definition of $K_1$ and $K_2$ can be found in \cite{naj}. There is exactly one elliptic curve with torsion $\Z/11\Z$ over $K_1$ and two curves with torsion $\Z/14\Z$ over $K_2$. The proofs and explicit models of these curves and fields can also be found in \cite{naj}.

The finiteness of exceptional pairs over quadratic and cubic fields is not surprising, as it follows trivially from the following proposition.

\newtheorem{tmprp}[tm]{Proposition}
\begin{tmprp}
Let $A$ be a set of number fields such that there exists a bound $B$ such that every number field $K\in A$ is of degree $\leq B$ and that for all fields $K\in A$, the only subfield of $K$ is $\Q$. Then there are only finitely many exceptional pairs $(T,K)$, with $K\in A$.
\label{prop1}
\end{tmprp}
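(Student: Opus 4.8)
The plan is to reduce to one torsion group at a time and then combine two inputs: the uniform boundedness of torsion and the absence of proper subfields. Since $S$ is finite, it suffices to fix $T=\Z/m\Z\oplus\Z/n\Z\in S$ and show that at most finitely many $K\in A$ make $(T,K)$ exceptional. Write $E=X_1(m,n)$, which by hypothesis is an elliptic curve over $F=\Q(\zeta_m)$, and note $[F:\Q]\le 2$. The first thing I would record is that $X_1(m,n)$ has no non-cuspidal point with field of definition $\Q$: if $m\ge 3$ this is because the curve attached to such a point has full $m$-torsion rational, forcing $\zeta_m$ into its field of definition; if $m\le 2$ it is Mazur's theorem, since none of $\Z/11\Z,\ \Z/14\Z,\ \Z/15\Z,\ \Z/2\Z\oplus\Z/10\Z,\ \Z/2\Z\oplus\Z/12\Z$ occurs as a torsion subgroup over $\Q$. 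In particular $\Q(P)\neq\Q$ for every non-cuspidal point $P$ of $X_1(m,n)$.

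Now suppose $(T,K)$ is exceptional with $K\in A$ and $d:=[K:\Q]\le B$. By condition (2) there is a non-cuspidal point $P\in X_1(m,n)(K)$; its field of definition $\Q(P)$ is a subfield of $K$ different from $\Q$, so the hypothesis on $A$ forces $\Q(P)=K$. This also shows $F\subseteq K$, so $E(K)$ is a well-defined finitely generated abelian group, and condition (1) says it has rank $0$; hence $E(K)=E(K)_{tors}$ is finite. By the theorem of Merel on the uniform boundedness of torsion there is a constant $N=N(B)$, depending only on $B$, with $|E'(L)_{tors}|\le N$ for every number field $L$ of degree $\le B$ and every elliptic curve $E'/L$. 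Applying this to $E/K$ bounds the order of $P$ by $N$, so $P$ lies in the finite set $E(\overline{\Q})[M]$, where $M=\operatorname{lcm}(1,2,\dots,N)$.

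It follows that $K=\Q(P)$ takes only finitely many values as $P$ ranges over $E(\overline{\Q})[M]$, so only finitely many $K\in A$ make $(T,K)$ exceptional; summing over the finitely many $T\in S$ finishes the argument. I expect the only genuinely delicate point to be keeping the two hypotheses working together rather than any single estimate: the step $\Q(P)=K$ is where the no-proper-subfield assumption is essential — without it a non-cuspidal point $P$ of small degree over $\Q$ can generate many different fields $K$ (an elliptic curve over $\Q$ typically has infinitely many quadratic points), which is exactly why the conclusion fails over general quartic fields — while condition (1) together with Merel's theorem is what collapses $P$, and hence $K=\Q(P)$, to a finite list once the first step is in place.
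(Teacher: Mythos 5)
Your proof is correct and follows essentially the same route as the paper's: Merel's uniform boundedness plus the rank-zero condition forces the non-cuspidal point $P$ to be a torsion point of $X_1(m,n)$ of bounded order, and the no-proper-subfield hypothesis then pins $K$ down to one of finitely many fields. The only difference is cosmetic — the paper concludes that $K$ is a subfield of the fixed torsion field $L$ generated by all small-order torsion, while you conclude $K=\Q(P)$ for $P$ ranging over the finite set $E(\overline{\Q})[M]$ — and your explicit justification that $\Q(P)\neq\Q$ (Mazur for $m\le 2$, the Weil pairing for $m\ge 3$) is a detail the paper leaves implicit.
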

\begin{proof}
Let $\{d_1,\ldots, d_k\}$ be the set of the degrees of all fields in $A$. For every $d_i$, by the Uniform Boundness Conjecture (which is a theorem of Merel \cite{mer}), there exists a bound $N_i$ such that an elliptic curve over a field of degree $d_i$ cannot have a point of order greater than $N_i$. Let $N=max\{N_i\}$. So for any number field $K\in A$ there exists no elliptic curve with a point of order $>N$. Let $X_1(m,n)$ be an elliptic (modular) curve and $L$ the field obtained by adjoining all the $n$-torsion points of $X_1(m,n)$, for all $n\leq N$. We conclude that $X_1(m,n)$ has torsion larger over $K\in A$ than over $\Q$ only if $K$ contains a subfield $\neq \Q$ of $L$. Since the only proper subfield of $K$ is $\Q$, this implies that $K$ is a subfield of $L$. But since $L$ is a number field, there are only finitely many subfields of $L$ and we are done.
\end{proof}

 The set of all fields of degree $p$, where $p$ is a fixed prime, satisfies the assumptions of Proposition \ref{prop1}, so a trivial corollary of it is that there are only finitely many exceptional curves over all fields of degree $p$.

Note that the assumption in Proposition \ref{prop1} that all $K\in A$ have no proper subfields is necessary. If this assumption is removed, then, using the notation as in the proof of Proposition \ref{prop1}, there can exist infinitely many $K\in A$ such that $K\cup L$ is some subfield of $L$ over which $X_1(m,n)$ has torsion larger than over $\Q$. We will see precisely this phenomenon in the next section.

\section{Exceptional pairs for torsion $\Z/14 \Z$ and $\Z/15 \Z$}

In this section we prove that there are infinitely many exceptional pairs $(T,K)$ where $T=\Z/14 \Z$ or $\Z/15 \Z$ and $K$ runs through all quartic fields.

Our strategy will use the fact that there exist exceptional pairs over quadratic fields. Let $L$ be the quadratic field from such an exceptional pair. We then find infinitely many quadratic extensions $K$ of $L$, such that there is still only finitely many elliptic curves over $K$ containing $T$.

It is easy to see that this happens if and only if the corresponding modular curve $X_1(m,n)$ remains a rank 0 elliptic curve over the biquadratic field $K$.

By $E^{(d)}$ we denote the quadratic twist of $E$ by $d$. We will use the well-known fact \cite[Exercise 10.16.]{sil} that if $K$ is a number field, $L$ a quadratic extension of $K$, $L=K(\sqrt d)$, and $E$ an elliptic curve defined over $K$, then
$$rank(E(L))=rank(E(K))+rank(E^{(d)}(K)).$$

In our case if $F=\Q(\sqrt {d_1}, \sqrt {d_2})$, since the modular curves $X_1(n)$ are defined over $\Q$, we have
$$rank(X_1(n)(\Q(\sqrt {d_1}, \sqrt {d_2}))=rank(X_1(n)(\Q(\sqrt {d_1}))+$$ $$+rank(X_1^{(d_2)}(n)(\Q(\sqrt {d_1}))
=rank(X_1(n)(\Q))+$$ $$+rank(X_1^{(d_1)}(n)(\Q))+rank(X_1^{(d_2)}(n)(\Q))+rank(X_1^{(d_1d_2)}(n)(\Q)).$$

We start with $X_1(14)$. Since we know that there is a positive finite number of elliptic curves over $\Q(\sqrt{-7})$ with 14-torsion, we have to prove that there exists infinitely many quadratic extensions $F$ of $\Q(\sqrt{-7})$ such that $rank(X_1(14)(\Q(\sqrt {-7},$ $\sqrt {d}))=0$. We actually prove a stronger statement, namely that there are infinitely many primes $p$ such that $$rank(X_1(14)(\Q(\sqrt {-7},\sqrt {p}))=0.$$

As previously noted we have to prove
$$rank(X_1(14)(\Q(\sqrt {-7}, \sqrt {p})))=rank(X_1(14)(\Q))+rank(X_1^{(-7)}(14)(\Q))+$$
$$+rank(X_1^{(p)}(14)(\Q))+rank(X_1^{(-7p)}(14)(\Q))=0.$$

As we already know that $rank(X_1(14)(\Q))=rank(X_1^{(-7)}(14)(\Q))=0$, we have to find primes $p$ such that $$rank(X_1^{(p)}(14)(\Q))=rank(X_1^{(-7p)}(14)(\Q))=0.$$

We do this by examining \emph{Selmer groups}. We will do a descent using $2$-isogenies. This method is well suited for our purposes, since both $X_1(14)(\Q)$ and $X_1(15)(\Q)$ have points of order 2. A more thorough treatment and proofs associated with descent via $2$-isogenies can be found in \cite{sil}.

Suppose an elliptic curve $E$ over $\Q$ (we can specialize to $\Q$, as we are going to do descent over $\Q$) has a point of order 2. Then $E$ can be written as
$$E: y^2=x^3+ax^2+bx$$
with $a,b \in \Z$. There exists a $2$-isogeny $\phi$ from $E$ to
$$E':y^2=x^3-2ax^2+(a^2-4b)x.$$
There is also a dual isogeny $\psi: E' \rightarrow E$ such that
$\psi\circ \phi=[2]_{E}$ and $\phi\circ \psi=[2]_{E'}$. Define the map
$\alpha:E(\Q )\rightarrow \Q^*/{\Q^*}^2$ as $\alpha(O)={\Q^*}^2$,
$\alpha((0,0))=b{\Q^*}^2$ and for all other points $P=(x,y)\in E(\Q),$ $\alpha((x,y))=x{\Q^*}^2.$ In exactly the same way we define the map $\beta:E'(\Q )\rightarrow \Q^*/{\Q^*}^2$. One can show that $Im(\alpha)$  is isomorphic to $E(\Q)/\psi(E'(\Q))$ and $Im(\beta)$ is isomorphic to $E'(\Q)/\phi(E(\Q))$.

The relation
\begin{equation}rank(E(\Q))=\dim_{\F_2}(Im(\alpha))+\dim_{\F_2}(Im(\beta))-2\label{sel}\end{equation}
yields the desired rank.

It can be shown that $Im(\alpha)$ consists of classes $b_1{\Q^*}^2$, where $b_1$ is squarefree, $b=b_1b_2$, such that the homogenous space
$$C_{b_1}: a^2=b_1u^4+au^2v^2+b_2v^4$$
has a non-trivial solutions in integers. Note that one can assume $(u,v)=1$ without loss of generality. A similar statement holds for $Im(\beta)$.

The set of classes $b_1{\Q^*}^2$ such that $C_{b_1}$ is locally solvable everywhere (including at $\infty$) is called the Selmer group corresponding to the 2-isogeny $\psi$ and we denote it by $S(\psi)$.

Similarly, we denote the Selmer group corresponding to the 2-isogeny $\phi$ by $S(\phi)$.

As the Selmer groups give an upper bound for the rank,
$$rank(E(\Q))=\dim_{\F_2}Im(\alpha)+\dim_{\F_2}Im(\beta)-2 \leq\dim_{\F_2}S(\phi)+\dim_{\F_2}S(\psi)-2,$$
this gives us a method of proving that a certain elliptic curve has rank 0.

\medskip

We turn out attention back to $X_1(14)$.

\newtheorem{tm00}[tm]{Theorem}
\begin{tm00}
Let $p$ be a prime satisfying $p\equiv 3 \pmod 8$ and $\left(\frac{p}{7}\right)=1$. Then $rank(X_1^{(p)}(14)(\Q))=rank(X_1^{(-7p)}(14)(\Q))=0.$
\label{t1}
\end{tm00}

\begin{proof}
We give only a short version of the proof, as the complete proof would be quite technical and can be easily reproduced.

For simpler notation we write $E(n)$ instead of $X_1^{(n)}(14)(\Q)$ and $E'(n)$ for the curve 2-isogenous to $E(n)$. An explicit model of $X_1(14)$ can be found in \cite{yan}. We write the relevant curves in short Weierstrass form:
$$E(p):y^2=x^3-11px^2+32p^2x,$$
$$E(-7p):y^2=x^3+77px^2+1568p^2x,$$
$$E'(p):y^2=x^3+22px^2 - 7p^2x,$$
$$E'(-7p):y^2=x^3-154px^2-343p^2x.$$

Suppose $p\equiv 3 \pmod 8$ and $(\frac{p}{7})=1$ and let $\phi_1$ be the 2-isogeny from $E(p)$ to $E'(p)$ and $\psi_1$ its dual.

The Selmer groups $S(\psi_1)$ and $S(\phi_1)$ are contained in $\diam{-1,2,7,p}\subset \Q^*/(\Q^*)^2$. By examining the corresponding homogenous spaces locally and using the conditions on $p$, one can show that $S(\psi_1)=\diam{2}$. Similarly one can show that $S(\phi_1)=\diam{-7}$.

This proves that $rank(X_1^{(p)}(14)(\Q))=0.$

Let $\phi_2$ be the 2-isogeny from $E(-7p)$ to $E'(-7p)$ and $\psi_2$ its dual. $S(\psi_2)$ and $S(\phi_2)$ are again contained in $\diam{-1,2,7,p}\subset \Q^*/(\Q^*)^2$.

As before, one can easily deduce by local considerations that $S(\psi_2)=\diam{2}$ and $S(\phi_2)=\diam{-7}$ which implies $rank(X_1^{(-7p)}(14)(\Q))=0.$

\end{proof}

\newtheorem{cor1}[tm]{Corollary}
\begin{cor1}
There exists infinitely many primes $p$ such that for $K=\Q(\sqrt{-7}, \sqrt{p})$, the pair $( \Z / 14\Z, K)$ is exceptional.
\label{c1}
\end{cor1}
\begin{proof}
By Dirichlet's theorem on arithmetic progressions there are infinitely many primes satisfying the assumptions of Theorem \ref{t1}, and hence satisfying
$$rank(X_1(\Q(\sqrt{-7}, \sqrt{p})))=0.$$
\end{proof}

\newtheorem{tm2}[tm]{Theorem}
\begin{tm2}
Let $p$ be a prime satisfying $p\equiv 7 \pmod 8$, $p\equiv 2 \pmod 3$ and $\left(\frac{p}{5}\right)=-1$. Then $rank(X_1^{(p)}(15)(\Q))=rank(X_1^{(-15p)}(15)(\Q))=0$.
\end{tm2}
\begin{proof}
For simplicity we denote $X_1^{(p)}(15)(\Q)$ by $E(p)$ and the 2-isogenous curve by $E'(p)$.
 An explicit model of $X_1(15)$ can be found in \cite{yan}. The curve $E(p)$ is, when changed to short Weierstrass form, $$E(p):y^2=x^3-7px^2+16p^2x.$$
The elliptic curves $E(-15p)$, $E'(p)$ and $E'(-15p)$ are constructed as in the proof of Theorem \ref{t1}.

We denote $\phi_1$ to be the 2-isogeny from $E(p)$ to $E'(p)$, $\psi_1$ its dual, $\phi_2$ the 2-isogeny from $E(-15p)$ to $E'(-15p)$, and $\psi_2$ its dual.

One can se that $$S(\psi_1),\ S(\phi_1),\ S(\psi_2),\ S(\phi_2)\subset\diam{-1,2,3,5,p}\subset \Q^*/(\Q^*)^2.$$
By using the assumptions on $p$ and examining the corresponding homogenous spaces locally, one can show that $S(\psi_1)$ is trivial, i.e. $\dim_{\F_2}S(\psi_1)=0$ and $S(\phi_1)=\diam{-15,p}$. It follows that $rank(X_1^{(p)}(15)(\Q))=0$.

It is easy to work out that  $S(\psi_2)$ is trivial and $S(\phi_2)=\diam{-15,p}$. It follows that $rank(X_1^{(-15p)}(15)(\Q))=0$.

\end{proof}

In a similar manner as Corollary \ref{c1}, one can prove the following.

\newtheorem{cor2}[tm]{Corollary}
\begin{cor2}
There exists infinitely many primes $p$ such that for $K=\Q(\sqrt{-15}, \sqrt{p})$, the pair $( \Z / 15\Z, K)$ is exceptional.
\label{c2}
\end{cor2}

We would also like to know the number of fields such that there are infinitely many elliptic curves with a subgroup isomorphic to $T$, where $T=\Z/14\Z$ or $\Z/15\Z$. This is not hard as we can see in the following proposition.

\newtheorem{propinf}[tm]{Proposition}
\begin{propinf}
Let $T=\Z/14\Z$ or $\Z/15\Z$. There are infinitely many quartic fields $K$ such that there exists infinitely many elliptic curves over $K$ having $T$ as a subgroup.
\label{pinf}
\end{propinf}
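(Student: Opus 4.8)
The plan is to reduce this to the statement that $X_1(14)$ and $X_1(15)$, viewed as elliptic curves over $\Q$, have infinitely many quadratic twists of positive rank, and then to promote those twists to quartic (biquadratic) fields via the rank formula used throughout this section.

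First I would recall the standard dictionary. For $T=\Z/m\Z\oplus\Z/n\Z\in S$ and a number field $K\supseteq\Q(\zeta_m)$, there are infinitely many elliptic curves over $K$ (up to $\overline K$-isomorphism) with $T$ as a subgroup precisely when $Y_1(m,n)(K)$ is infinite; since $X_1(m,n)\setminus Y_1(m,n)$ is finite and the natural map $X_1(m,n)\to X(1)$ has finite degree, this is equivalent to $\mathrm{rank}(X_1(m,n)(K))\geq 1$, because $X_1(m,n)$ is an elliptic curve. So for $T=\Z/14\Z$ it suffices to exhibit infinitely many quartic fields $K$ with $\mathrm{rank}(X_1(14)(K))\geq 1$, and similarly for $\Z/15\Z$.

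The key input is the classical fact that every elliptic curve $E/\Q$ has infinitely many squarefree $d$ with $\mathrm{rank}(E^{(d)}(\Q))\geq 1$, which I would prove in the usual elementary way. Write $E\colon y^2=x^3+Ax+B$ with $A,B\in\Z$. For each integer $n$ put $g(n)=n^3+An+B=d_n c_n^2$ with $d_n$ squarefree; then $(d_n n,\,d_n^2 c_n)$ is a rational point $Q_n$ on $E^{(d_n)}$, corresponding under the $\Q(\sqrt{d_n})$-isomorphism $E^{(d_n)}\cong E$ to a point of $E(\Q(\sqrt{d_n}))$ with $x$-coordinate $n$. By Mazur's theorem, if $Q_n$ were torsion its order would be at most $12$, so $n$ would be the $x$-coordinate of one of the finitely many points of $\bigcup_{k\leq 12}E[k]$; hence $Q_n$ is non-torsion, and $\mathrm{rank}(E^{(d_n)}(\Q))\geq 1$, for all but finitely many $n$. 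Finally, by Siegel's theorem each squarefree $d$ equals $d_n$ for only finitely many $n$, so infinitely many distinct $d$ occur.

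To finish, I would apply this with $E=X_1(14)$ and pick an infinite set $D$ of squarefree integers $d\notin\{1,-7\}$ with $\mathrm{rank}(X_1^{(d)}(14)(\Q))\geq 1$. For $d\in D$ set $K_d=\Q(\sqrt{-7},\sqrt d)$, which is a quartic field since $d$, $-7$ and $-7d$ are all non-squares. The biquadratic rank formula recalled above gives $\mathrm{rank}(X_1(14)(K_d))\geq \mathrm{rank}(X_1^{(d)}(14)(\Q))\geq 1$, so $X_1(14)(K_d)$ is infinite and there are infinitely many elliptic curves over $K_d$ with $\Z/14\Z$ as a subgroup. A biquadratic field contains only three quadratic subfields, so $d\mapsto K_d$ is at most two-to-one, and the $K_d$ exhaust infinitely many distinct quartic fields. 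The argument for $T=\Z/15\Z$ is identical, replacing $-7$ by $-15$ and $X_1(14)$ by $X_1(15)$. The only non-formal ingredient is the positive-rank twist statement, but that is classical and elementary as sketched; I do not expect any genuine obstacle here, as this proposition is by far the easiest part of Theorem \ref{gltm}.
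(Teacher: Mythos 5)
Your proof is correct and follows essentially the same route as the paper: reduce to showing $X_1(n)$ has positive rank over infinitely many quartic fields, obtain this from infinitely many positive-rank quadratic twists, and lift via the biquadratic rank formula. The only difference is that the paper simply cites \cite[Lemma 3.4 b)]{jky4} for the infinitude of positive-rank quadratic extensions, whereas you supply the classical self-contained argument (sweeping integer $x$-coordinates, Mazur to rule out torsion, Siegel to get infinitely many distinct twists), which is a fine substitute.
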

\begin{proof}
We are done once we have proven that there are infinitely many quartic fields $K$ such that the elliptic curve $X_1(n)(K)$, where $n=14$ or $15$, has positive rank. It is easy to prove (see for example \cite[Lemma 3.4 b)]{jky4}) that the rank of an elliptic curve goes up in infinitely many quadratic extensions. Then it also goes up in infinitely many biquadratic fields.
\end{proof}

Corollaries \ref{c1} and \ref{c2} and Proposition \ref{pinf} prove part 1 of Theorem \ref{gltm}.

\section{Exceptional pairs for torsion $\Z/11 \Z$, $\Z/2\Z \oplus \Z/10\Z$ and $\Z/2\Z \oplus \Z/12\Z$}

It is not hard to prove that there is finitely many exceptional pairs $(T, K)$ when $T=\Z/11 \Z$, $\Z/2\Z \oplus \Z/10\Z$ or $\Z/2\Z \oplus \Z/12\Z$ and $K$ is a quartic field. In fact a much more general statement holds.

\newtheorem{lem2}[tm]{Proposition}
\begin{lem2}
Suppose $n$ is an integer and $T$ is a torsion group such that for every divisor $d$ of $n$ there exists no exceptional pairs $(T,F)$ where $F$ is a field of degree $d$. Then there is finitely many exceptional pairs $(T, K)$, where $K$ is a field of degree $n$.
\end{lem2}
\begin{proof} The proof is a variation of the reasoning used in the proof of Proposition \ref{prop1} (or \cite[Lemma 3.3]{jks}) and relies on the fact that the torsion of an elliptic curve over a number field $F$ gets larger in only finitely many cyclic extensions of $F$. The details are left as an exercise for the reader.
\end{proof}

We are left to determine, for each $T=\Z/11 \Z$, $\Z/2\Z \oplus \Z/10\Z$ or $\Z/2\Z \oplus \Z/12\Z$, whether there exists an exceptional pair $(T, K)$ at all.

As mentioned in the introduction, this amounts to finding all the quartic fields $K$ such that the corresponding modular curve $X_1(m,n)$ has noncuspidal torsion points and rank $0$.

To find over which quartic fields the torsion of $X_1(m,n)$ gets larger, we use division polynomials and Galois representations attached to $X_1(m,n)$.
For a definition and more information on division polynomials see \cite{was}. We denote by $\psi_n$ the $n$-th division polynomial, which satisfies that, for a point $P$ on an elliptic curve in Weierstrass form, $\psi_n(x(P))=0$ if and only if $nP=0$. Note that for even $n$ one has to work with $\psi_n/\psi_2$ to get a polynomial in only one variable.

Let $E[n]$ denote the $n$-th division group of $E$ over $\overline \Q$ and let $\Q(E[n])$ be the $n$-th division field of $E$. The Galois group $Gal(\overline \Q /\Q)$ acts on $E[n]$ and gives rise to an embedding $\phi_n: Gal(\Q(E[n])/\Q)  \hookrightarrow GL_2(\Z/n\Z)$ called a \emph{Galois representation associated to $E[n]$}. It is not hard to see that if $n$ is an odd prime and $\phi_n$ is surjective, then a field over which a point of order $n$ is defined is of degree $\geq 8$. For $n=2$, if $\phi _n$ is surjective, then a point of order 2 will be defined over a cubic field and the complete 2-torsion will be defined over a sextic field, but there will be no new 2-torsion over any quartic field.

For an elliptic curve without complex multiplication, this embedding is surjective for all but finitely many primes \cite{ser}. The primes for which this embedding is possibly not surjective can be computed in SAGE \cite{sag}.

We start with $T=\Z/11\Z$. The elliptic curve $X_1(11)$ is 11A3 in Cremona's tables \cite{cre} has an affine model
$$y^2-y=x^3-x^2.$$

Note that $X_1(11)(\Q)\simeq \Z/5\Z$, where all $5$ rational torsion points are cusps, and that $X_1(11)$ does not have complex multiplication (CM).

We prove the following result.

\newtheorem{tm11}[tm]{Theorem}
\begin{tm11}
There are no exceptional pairs $(\Z/11\Z, K)$, where $K$ is a quartic field.
\label{t11}
\end{tm11}
\begin{proof}

We compute that $X_1(11)$ has a surjective Galois representation $\phi_p$ for all primes $p \neq 5$.

%Since $X_1(11)(\Q)$ has no rational 2-torsion points, we know that
%$$X_1(11)(\Q)[2] \supseteq \Z /2\Z$$
%over some cubic field, and that the field obtained by adjoining all the 2-torsion points is cubic or sextic. We conclude that there are no 2-torsion points over any quartic field.

%The $3$-division polynomial $\psi_3$ of $E$ is quartic, but we can check that the torsion of $E$ over the quartic $K$ field obtained by adjoining the root of the $\psi_3$ is still $\Z/5\Z$. This is because although the $x$-coordinate of a 3-division point is defined over $K$, the $y$ coordinate is not, but is instead defined over a quadratic extension of $K$.

The polynomial $\psi_5$ factors into a product of two factors of degree one (corresponding to the rational 5-torsion points) and one factor of degree 10. Hence there are no additional 5-torsion points over quartic fields.

It remains to check the 25-torsion. The polynomial $\psi_{25}$ has 2 degree 1 factors (corresponding to the rational 5-torsion), 2 factors of degree 5, 1 of degree 10, 2 of degree 20 and 1 of degree 250. Hence there are no quartic 25-torsion points.

We conclude that the torsion of $X_1(11)$ stays $\Z/5\Z$ over all quartic fields.
\end{proof}

Now we examine the torsion group $\Z/2\Z \oplus \Z/10\Z$. The modular curve $X_1(2,10)$ is 20A2 in \cite{cre}, is a non-CM curve and has an affine model
$$y^2=x^3+x^2-x,$$
and $X_1(2,10)(\Q)\simeq \Z/6\Z$.

\newtheorem{tm12}[tm]{Theorem}
\begin{tm12}
There is exactly one elliptic curve with torsion $\Z/2\Z \oplus \Z/10\Z$ over $\Q(\sqrt{2i+1})$. There are no exceptional pairs $(\Z/2\Z \oplus \Z/10\Z, K)$ where $K\neq \Q(\sqrt{2i+1})$ is a quartic field.
\label{t12}
\end{tm12}
\begin{proof}
First we compute that $\phi_p$ is surjective for all primes $p\neq 2,3$.

The polynomial $\psi_3$ factors as a product of a linear polynomial corresponding to the rational 3-torsion points and a degree 3 polynomial. Hence there are no additional quartic 3-torsion points.

The polynomial $\psi_9$ has all factors of degree 9 or larger if we exclude the factors of $\psi_3$, and thus there are no 9-torsion points over a quartic field.

As there is 1 point of order 2 in $X_1(2,10)(\Q)$, we know that the complete 2-torsion is defined over one quadratic field. That quadratic field is in our case $\Q(\sqrt 5)$. However, as it was proved in \cite{kn}, all the points on $X_1(2,10)(\Q(\sqrt 5))$ are cusps.

Next we factor
$$\psi_4=x(x^2+1)(x^2+x-1)(x^4 + 2x^3 - 6x^2 - 2x + 1).$$
The factor $x$ comes from the $\Q$-rational $2$-torsion, $(x^2+x-1)$ from the remaining 2-torsion points (defined over $\Q(\sqrt 5)$). The number field $F$ generated by the polynomial $x^4 + 2x^3 - 6x^2 - 2x + 1$ has $\Q(\sqrt 5)$ as a subfield, and $X_1(2,10)(F)_{tors}\simeq X_1(2,10)(\Q(\sqrt 5))_{tors}$, so we conclude that all points in $X_1(2,10)(F)_{tors}$ are cusps.

We now examine the points coming from the factor $x^2+1$. We see that the points corresponding to this factor are defined over the quartic field $K=\Q(\sqrt{2i+1})$. We find that $X_1(2,10)(K)_{tors}\simeq \Z/12\Z$ and $rank(X_1(2,10)(K))=0$. One checks that the additional torsion points are not cusps, and that all the points generate the same curve
\begin{equation}E_{\Z/2\Z \oplus \Z/10\Z}:y^2 + (w^2 + 3)xy - 8y = x^3 + (2w^2 + 2)x^2,\label{z210}\end{equation}
where $w^2=2i+1$. Note that the  $j$-invariant of this elliptic curve is 1728 and the rank is 0. Thus we have constructed an exceptional pair $(E_{\Z/2\Z \oplus \Z/10\Z},\Q(\sqrt{2i+1}))$, and we see that this is actually the only one.

\end{proof}

Note that we can easily check that $\Z/2\Z \oplus \Z/10\Z$ is the complete torsion group of the curve $E_{\Z/2\Z \oplus \Z/10\Z}$ from (\ref{z210}). Part 3 of Theorem \ref{gltm} follows from Theorem \ref{t12}.

Finally we examine the torsion group $\Z/2\Z \oplus \Z/12\Z$. The modular curve $X_1(2,12)$ is 24A4 in \cite{cre}, is a non-CM curve and has an affine model
$$y^2=x^3-x^2+x,$$
and $X_1(2,12)(\Q)\simeq \Z/4\Z$.

\newtheorem{tm13}[tm]{Theorem}
\begin{tm13}
There are no exceptional pairs $(\Z/2\Z \oplus \Z/12\Z, K)$, where $K$ is a quartic field.
\label{t13}
\end{tm13}
\begin{proof}
The strategy of the proofs is the same as in the previous two theorems.

First we compute that the Galois representation $\phi_p$ is surjective for all primes $p\neq 2$.

The curve $X_1(2,12)$ has additional 2-torsion points over $\Q(\sqrt 3)$, $\Q(\sqrt {-3})$ and $\Q(i)$, but all of them over each individual field are cusps. We check $X_1(2,12)$ over $\Q(\zeta_{12})$, the field containing the three mentioned fields as subfields, and compute $X_1(2,12)(\Q(\zeta_{12}))_{tors}\simeq \Z/2\Z \oplus \Z/8\Z$, and obtain that all the torsion points are cusps.

By checking the zeroes of $\psi_4$, one checks that there are no other quartic 4-torsion points on $X_1(2,12)$.

By checking the 8-torsion points, we find the field $K$ generated by a root of the polynomial $x^4 + 4x^3 - 6x^2 + 4x + 1$, and compute $X_1(2,12)(K)_{tors}\simeq \Z/8\Z$. But again all the torsion points are cusps.

One easily checks that there are no quartic 16-torsion points, and thus there are no exceptional pairs with torsion $\Z/2\Z \oplus \Z/12\Z$.

\end{proof}

\section{Exceptional pairs for torsion $\Z/3\Z \oplus \Z/9\Z$, $\Z/4\Z \oplus \Z/8\Z$ and $\Z/6\Z \oplus \Z/6\Z$}

%The cases $T=\Z/3\Z \oplus \Z/9\Z$, $\Z/4\Z \oplus \Z/8\Z$ and $\Z/6\Z \oplus \Z/6\Z$ are a bit harder. The first obvious reason is that, unlike the previous cases, as far as we know, there is no place in the literature where explicit models of $X_1(m,n)$ for $(m,n)=(3,9),\ (4,8)$ $(6,6)$ are given. While finding nice models for modular curves $X_1(n)$ is a subject that has received quite a lot of attention (see for example \cite{Baa,yan}), to our knowledge the only modular curves of positive genus with known affine models are $X_1(2,10)$ and $X_1(2,12)$ (see \cite{rab}).

Although models of modular curves parameterizing the torsion groups $T=\Z/3\Z \oplus \Z/9\Z$, $\Z/4\Z \oplus \Z/8\Z$ and $\Z/6\Z \oplus \Z/6\Z$ were found in \cite{bc}, equations for elliptic curves that arise from points on these modular curves were not given there.

Below we find the equations of the mentioned modular curves that we obtain in a slightly different way than in \cite{bc}. Also, we give the moduli spaces of elliptic curves with all the mentioned torsion groups except for $T=\Z/3\Z \oplus \Z/9\Z$. We do this both to make life easier for the reader and also because we believe that this can independently be of interest. For the case $T=\Z/3\Z \oplus \Z/9\Z$, we obtain the corresponding modular curve, and the map from the general elliptic curve $\mathcal E$ containing $T$ to $X_1(3,9)$, but not its inverse. However, in explicit examples, we were able to find elliptic curves containing $T$ from a point on $X_1(3,9)$, by computing the fiber of the map from $\mathcal E$ to $X_1(3,9)$.

As mentioned in the introduction, one can see from \cite[Proposition 2.7.]{jkp} that all three curves have genus 1.
We proceed to first construct affine models of these curves.

\newtheorem{x39}[tm]{Lemma}
\begin{x39}
The modular curve $X_1(3,9)$, defined over $\Q(\sqrt{-3})$, has an affine model
$$X_1(3,9):y^2+y=x^3.$$
\label{lem39}
\end{x39}
\begin{proof}
We start by taking a generic elliptic curve with 9-torsion over a number field $K\supset\Q(\sqrt{-3}) $ \cite[Table 3]{kub},
$$E(t):y^2+(-t^3+t^2)xy+(t^2-t+1)(t^3-t^2)y=x^3+(t^2-t+1)(t^3-t^2)x^2,$$
where $t\in K$, such that $\Delta(E(t))\neq 0$. We now factor the 3-division polynomial of this curve and obtain that for the curve to have additional points of order 3 over some field $K$ is equivalent to the equation
$$x^3 + (1/3t^6 - 2t^5 + 3t^4 - 7/3t^3 + t^2 + 1/3)x^2 + $$
\begin{equation} +(1/3t^9 - 4/3t^8 + 2t^7 - 4/3t^6 - 2/3t^5 + 2t^4 - 5/3t^3 + 2/3t^2)x +\label{jed39}\end{equation}
$$ + 1/3t^{12} -5/3t^{11} + 13/3t^{10} - 22/3t^{9} + 26/3t^8 - 22/3t^7 + 13/3t^6 - 5/3t^5 +1/3t^4=0$$
having a solution  over $K$ ($x$ in equation (\ref{jed39}) is the $x$-coordinate of the additional 3-torsion point).

We now prove that, if an elliptic curve $E$ over $K\supset\Q(\sqrt{-3})$ has a point $Q$ of order 3, and if the $x$-coordinate $u$ of a point $P=(u,v)\in E[3](\overline K), P\not\in \diam{Q}$, is in $K$, so is the $y$-coordinate $v$. Suppose the opposite, $v\not\in K$. Then $v\in L$, where $[L:K]=2$. Let $\sigma$ be the generator of $Gal(L/K)$. As $2P=-P$, it follows that $x(2P)=x(P)$ and $\sigma(P)=-P=2P$. Let $w$ be the Weil pairing on $E[3]=\diam{P,Q}$, and $w(P,Q)=\zeta$, where $\zeta$ is some primitive third root of unity. But $\sigma(\zeta)=\zeta$ and $\sigma(w(P,Q))=w(\sigma(P),\sigma(Q))=w(2P,Q)=\zeta^2$, which is a contradiction. Thus the existence of a solution of equation (\ref{jed39}) is a sufficient condition for all of $E[3]$ to be defined over $K$.

We next compute the projective model (using MAGMA \cite{mag}) of the curve (\ref{jed39})
$$Y^{12} - 5Y^{11}Z + XY^9Z^2 + 13Y^{10}Z^2 - 4XY^8Z^3 - 22Y^9Z^3 + $$
$$+X^2Y^6Z^4 + 6XY^7Z^4 + 26Y^8Z^4 - 6X^2Y^5Z^5 - 4XY^6Z^5 -$$
\begin{equation}-    22Y^7Z^5 + 9X^2Y^4Z^6 - 2XY^5Z^6 + 13Y^6Z^6 - 7X^2Y^3Z^7 + \label{proj39}\end{equation}
$$+    6XY^4Z^7 - 5Y^5Z^7 + 3X^2Y^2Z^8 - 5XY^3Z^8 + Y^4Z^8 +$$
$$ + 3X^3Z^9  + 2XY^2Z^9 + X^2Z^{10}=0,$$
and using $(1:0:0)$ as a basepoint construct the elliptic curve (again using MAGMA)
$$y^2 + 6xy - 81y = x^3 - 36x^2 + 486x - 2187,$$
which is isomorphic to
$$y^2 + y = x^3.$$
\end{proof}

Note that the elliptic curve $X_1(3,9)$ is 27A3 in \cite{cre}.

\newtheorem{x48}[tm]{Lemma}
\begin{x48}
The modular curve $X_1(4,8)$, defined over $\Q(i)$, has an affine model
$$X_1(4,8):y^2=x^3-x.$$
\label{lem48}
\end{x48}
\begin{proof}
We start by taking a generic elliptic curve containing torsion $\Z/2\Z \oplus \Z/8\Z$ over a number field $K$ \cite[Table 3]{kub}, which has the form
$$E(t):y^2=x\left(x+\left(t^2-1\right)^2\right)\left(x+\left(\frac{4t^2}{t^2-1}\right)^2\right)$$
where $t\in K$, such that $\Delta (E(t))\neq 0$. The point of order $2$ that is not by default divisible by 2 is $(-16t^4/(t^4 - 2t^2 + 1),0)$. One can see that for it to be divisible by 2 in a number field $K$ (which implies that $E(t)$ contains torsion $\Z/4\Z \oplus \Z/8\Z$), by the 2-descent Theorem \cite[Theorem 4.2., p. 85]{kna}, it is equivalent to the expression $$\left(\frac{t^2+1}{t^2-1}\right)^2(t^2 - 2t - 1)(t^2 + 2t - 1)$$
being a square. So this is equivalent to
\begin{equation}s^2=(t^2 - 2t - 1)(t^2 + 2t - 1)\label{jd48}\end{equation}
having a point such that $E(t)$ is not a singular curve. The curve (\ref{jd48}) is isomorphic to
$$X_1(4,8):y^2=x^3-x.$$
Note also that $E(t)$ has to be defined over a field containing $\Q(i)$ for it to have torsion $\Z/4\Z \oplus \Z/8\Z$.
\end{proof}

Note that the elliptic curve $X_1(4,8)$ is 32A2 in \cite{cre}.

\newtheorem{x66}[tm]{Lemma}
\begin{x66}
The modular curve $X_1(6,6)$, defined over $\Q(\sqrt{-3})$, has an affine model
$$X_1(6,6):y^2=x^3+1.$$
\label{lem66}
\end{x66}
\begin{proof}
We first start with a generic elliptic curve containing $\Z/3\Z \oplus \Z/6\Z$ over $\Q(\sqrt{-3})$ \cite[4.9., pp.47]{rab}, which has the form
$$E(t):y^2+2(9t^2-30t^2+60t-40)xy-144(3t-2)(3t^2+4)(3t^2-6t+4)(t-2)^3y=$$
$$=x^3-16(3t-2)(3t^2+4)(3t^2-6t+4),$$
where $t\in \Q(\sqrt{-3})$ such that $E(t)$ is not singular. For $E(t)$ to have complete 2-torsion it is necessary for it to have a square discriminant and this translates to
$$s^2=2t(3t^2-6t+4),$$
which is isomorphic to
$$y^2=x^3+1.$$
\end{proof}

Note that the elliptic curve $X_1(6,6)$ is 36A1 in \cite{cre}.

We can also work backwards in Lemma \ref{lem48} to obtain an explicit description of the universal elliptic curve over $X_1(4,8)$ over any number field $K$ containing $\Q(i)$.

\newtheorem{prop48}[tm]{Proposition}
\begin{prop48}
All elliptic curves containing $\Z/4\Z \oplus \Z/8\Z$ over some number field $K$ containing $\Q(i)$ are of the form
$$E(t):Y^2=X\left(X+\left(t^2-1\right)^2\right)\left(X+\left(\frac{4t^2}{t^2-1}\right)^2\right)$$
where $t=y/(x-1)$, and $(x,y)$ is a point on
$$X_1(4,8):y^2=x^3-x$$
over $K$, such that $\Delta (E(t))\neq 0.$
The cusps on $X_1(4,8)$ are exactly the points such that $\Delta (E(t))=0.$
\label{prp48}
\end{prop48}

In the same way, we can work our way backwards in Lemma \ref{lem66} and obtain an explicit description of the universal elliptic curve over $X_1(6,6)$ over any number field containing $\Q(\sqrt{-3})$.

\newtheorem{prop66}[tm]{Proposition}
\begin{prop66}
All elliptic curves containing $\Z/6\Z \oplus \Z/6\Z$ over some number field $K$ containing $\Q(\sqrt{-3})$ are of the form
$$Y^2+a(t)XY+b(t)Y=X^3+c(t)X^2,$$
where
$$a(t)=16/3t^3 - 32/3t^2 + 128/3t - 64/3,$$
$$b(t)=-4096/27t^8 + 20480/27t^7 - 4096/3t^6 + 65536/27t^5 -$$ $$-180224/27t^4 + 32768/3t^3 - 262144/27t^2 + 131072/27t,$$
$$c(t)=16/3t^3 - 32/3t^2 + 128/3t - 64/3,$$
and $t$ is the $x$-coordinate  of a noncuspidal point
on
$$X_1(6,6):y^2=x^3+1$$
over $K$.
The cusps on $X_1(6,6)$ satisfy
$$x(x-2)(x+1)(x^2-x+1)(x^2+2x+4)=0.$$
\label{prp66}
\end{prop66}

\textbf{Remark 3.} Unfortunately, we are unable to do the same and work back through Lemma \ref{lem39}, since the map used to obtain $X_1(3,9)$ from the projective curve (\ref{proj39}) is very complicated and we were unable to compute its inverse.

\textbf{Remark 4.} Note that although infinitely many elliptic curves containing $\Z/4\Z \oplus \Z/8\Z$ and $\Z/6\Z \oplus \Z/6\Z$ have already been constructed in \cite{jky4}, in Propositions \ref{prp48} and \ref{prp66} we have actually done much more; we computed \emph{all} such curves. Using \cite{jky4} one can obtain one curve with given torsion per quartic field, while Propositions \ref{prp48} and \ref{prp66} yield infinitely many.

\medskip

As $X_1(3,9)$, $X_1(4,8)$ and $X_1(6,6)$ have complex multiplication, their Galois representations $\phi_p$ are not surjective for infinitely many primes. Thus we cannot apply the methods we used in the previous section.

However, we can use the following lemma.

\newtheorem{lemcm}[tm]{Lemma}
\begin{lemcm}
\label{cml}
The modular curves $X_1(3,9)$ and $X_1(6,6)$ have no points of prime order $p>7$ over any quadratic extension of $\Q(\sqrt{-3})$ and $X_1(4,8)$ has no points of prime order $p>5$ over any quadratic extension of $\Q(i)$.
\end{lemcm}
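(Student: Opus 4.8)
The plan is to use the complex multiplication of these three curves directly, since (as observed just before the statement) the mod-$p$ Galois representations are non-surjective for infinitely many $p$ and the surjectivity arguments of the previous section are unavailable. First I would record the CM data read off from Lemmas \ref{lem39}, \ref{lem48} and \ref{lem66}: the curves $X_1(3,9)\colon y^2+y=x^3$ (conductor $27$) and $X_1(6,6)\colon y^2=x^3+1$ (conductor $36$) have $j$-invariant $0$, hence CM by the maximal order $\Z[\zeta_3]$ of $L=\Q(\sqrt{-3})$, while $X_1(4,8)\colon y^2=x^3-x$ (conductor $32$) has $j$-invariant $1728$, hence CM by $\Z[i]=\mathcal{O}_{\Q(i)}$; in every case the endomorphisms are already defined over the field $L$ over which the modular curve is defined. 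Since the only primes of bad reduction are $2$ and $3$, every prime $p>3$ is a good prime, unramified in $L$, and then $E[p]$ is a free rank-one module over $\mathcal{O}_L/p\mathcal{O}_L$ on which $G_L:=Gal(\overline{\Q}/L)$ acts $\mathcal{O}_L$-linearly; hence the image of $G_L$ in $GL_2(\F_p)$ is contained in a Cartan subgroup $C$, split if $p$ splits in $\mathcal{O}_L$ and nonsplit if $p$ is inert.

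Next I would argue by contradiction. Suppose $E$ (one of the three, with CM field $L$) has a point $P$ of prime order $p>3$ over a quadratic extension $K$ of $L$, and set $G_K:=Gal(\overline{\Q}/K)$, so $[G_L:G_K]\le 2$ and the image of $G_K$ in $C$ fixes the nonzero vector of $E[p]$ corresponding to $P$. If $p$ is inert, $C$ is a nonsplit Cartan and acts freely on $E[p]\setminus\{0\}$, so the image of $G_K$ is trivial; then $E[p]\subseteq E(K)$, whence $\zeta_p\in K$ and $p-1=[L(\zeta_p):L]\le[K:L]=2$, i.e.\ $p\le 3$ — contradiction. If $p$ splits, write $p\mathcal{O}_L=\mathfrak{p}\overline{\mathfrak{p}}$, so $E[p]=E[\mathfrak{p}]\oplus E[\overline{\mathfrak{p}}]$ and $G_L$ acts on the two lines through characters $\chi_{\mathfrak{p}},\chi_{\overline{\mathfrak{p}}}\colon G_L\to\F_p^*$ whose product is the mod $p$ cyclotomic character $\chi_{\mathrm{cyc}}$. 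Because $E$ is defined over $\Q$, conjugation by a lift of complex conjugation is an automorphism of $G_L$ carrying $\chi_{\mathfrak{p}}$ onto $\chi_{\overline{\mathfrak{p}}}$, so the two share a common order $d$; since $\mathrm{ord}(\chi_{\mathrm{cyc}})=[L(\zeta_p):L]=p-1$ and $\mathrm{ord}(\chi_{\mathrm{cyc}})\mid\mathrm{lcm}(d,d)=d\mid p-1$, we get $d=p-1$. Now $E[p]^{G_K}=E[\mathfrak{p}]^{G_K}\oplus E[\overline{\mathfrak{p}}]^{G_K}$, so a point of order $p$ over $K$ would force $\chi_{\mathfrak{p}}|_{G_K}=1$ or $\chi_{\overline{\mathfrak{p}}}|_{G_K}=1$; but $\mathrm{ord}(\chi_{\mathfrak{p}}|_{G_K})\ge d/[G_L:G_K]\ge(p-1)/2>1$ for $p\ge 5$, and likewise for $\chi_{\overline{\mathfrak{p}}}$ — contradiction. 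Hence no prime $p\ge 5$ occurs over a quadratic extension of $L$, which in particular gives the two assertions of the lemma.

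The Cartan bookkeeping is routine; the one step I would flag as needing care is the split case — precisely the claim that $\chi_{\mathfrak{p}}$ and $\chi_{\overline{\mathfrak{p}}}$ are conjugate and hence of equal order, which is what excludes the a priori scenario where one of them is nearly trivial while the other alone accounts for $\chi_{\mathrm{cyc}}$. If one prefers to quote less CM theory, there is a more computational substitute: use only the crude bound that the image of $G_L$ has index in $C$ bounded in terms of $\mathcal{O}_L^{\times}$; this already disposes of every inert $p\ge 5$ and of every split $p$ above a small explicit threshold (around $13$ for $\Q(\sqrt{-3})$, around $9$ for $\Q(i)$), leaving only finitely many split primes, which one then eliminates by factoring the relevant division polynomials of the explicit models in Lemmas \ref{lem39}, \ref{lem48} and \ref{lem66}, exactly as in the proof of Theorem \ref{t13}. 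I expect that finite split-prime check to be the only mildly delicate point; everything else follows immediately from the CM structure.
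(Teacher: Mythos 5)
Your proof is correct, but it takes a genuinely different route from the one in the paper. The paper disposes of the lemma in two lines by citing the general bounds of Prasad--Yogananda \cite{py} and Silverberg \cite{sbg} on torsion of CM abelian varieties: if $e$ is the exponent of the torsion group over a field $K$ containing the CM field $L$ and $w=\#\mu_L$, then $\phi(e)\le w\,[K:L]=2w$; combined with the point of order $3$ (resp.\ $4$) already present over $L$, this forces $\phi(3p)=2(p-1)\le 12$ (resp.\ $\phi(4p)=2(p-1)\le 8$), i.e.\ $p\le 7$ (resp.\ $p\le 5$). You instead run the CM structure by hand through the mod-$p$ Cartan subgroup, and every step checks out: for $p>3$ the prime is good and unramified in $L$, the image of $G_L$ lands in the relevant Cartan, the inert case is killed by the free action of a nonsplit Cartan on nonzero vectors together with $[L(\zeta_p):L]=p-1$, and in the split case the conjugation-by-$c$ argument (valid here because all three Weierstrass models are defined over $\Q$) does show $\mathrm{ord}(\chi_{\mathfrak p})=\mathrm{ord}(\chi_{\overline{\mathfrak p}})=p-1$, so neither character can die on an index-$2$ subgroup once $p\ge 5$. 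The trade-off is instructive: the paper's argument is a black-box citation but only bounds the exponent, so it must stop at $p\le 7$ and leave the primes $5$ and $7$ to the division-polynomial computations in Theorems \ref{t39}, \ref{t48} and \ref{t66}; your argument is longer but self-contained and strictly sharper --- it rules out all primes $p\ge 5$ over any quadratic extension of the CM field, which is consistent with (and would render unnecessary) the subsequent checks of $\psi_5$ and $\psi_7$ in those proofs. Your proposed ``computational substitute'' for the split case is not needed, since the conjugacy of $\chi_{\mathfrak p}$ and $\chi_{\overline{\mathfrak p}}$ --- the one step you rightly flagged --- does hold in this setting.
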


\begin{proof}
From \cite{py} and \cite{sbg} it follows that in our setting $\phi(e)\leq 2w$, where $e$ is the exponent of the torsion group (over a quadratic extension of the CM field) and $w$ is the number of roots in the CM field.

After observing that all of the curves already have a point of order 3 or 4 over their CM fields, the statement of the lemma follows.
\end{proof}

\begin{comment}
\begin{proof}
By the theory of complex multiplication, $Gal(F(E[p])/F)$ is either a cyclic group of order $p^2-1$ or a product of two cyclic groups of order $p-1$. If $Gal(F(E[p])/F)$ is a cyclic group of order $p^2-1$, one can see that all the roots of the $p$-th division polynomial $\psi_p$ (of degree $(p^2-1)/2$) form a single orbit under the action of $Gal(F(E[p])/F)$, and hence $\psi_p$ is irreducible over $F$. In a similar manner, using a little group theory, one can show that if $Gal(F(E[p])/F)$ is a product of two cyclic groups of order $p-1$, then the field over which a point of order $p$ is defined is a extension of degree at least $p-1$.
\end{proof}
\end{comment}

%If $p$ is a prime dividing the torsion group of an elliptic curve over a number field of degree $d$, then by a result of Merel (and Oesterl\'e) \cite{mer}, $p\leq (1+3^{d/2})^2$. For $d=4$, this leads to $p\leq 97$.

\newtheorem{tm39}[tm]{Theorem}
\begin{tm39}
There are no exceptional pairs $(\Z/3\Z \oplus \Z/9\Z, K)$, where $K$ is a quartic field.
\label{t39}
\end{tm39}
\begin{proof}
Note that, by the Weil pairing an elliptic curve with complete 3-torsion has to be defined over some extension of $\Q(\sqrt{-3})$.  We first note that
$$X_1(3,9)(\Q(\sqrt{-3}))\simeq \Z/3\Z \oplus \Z/3\Z$$
and that all the points are cusps.

By Lemma \ref{cml}, we only need to examine the $p$-primary torsion for $p\leq 7$. As $X_1(3,9)(\Q(\sqrt{-3}))$ has no 2-torsion, it does not have any 2-torsion over any quadratic extension of $\Q(\sqrt{-3})$.

We factor $\psi_5,$ $\psi_7$ and $\psi_9$ and see that all their factors (apart from the ones in $\psi_9$ corresponding to $\psi_3$) have degree $\geq 3$.

Thus
$$X_1(3,9)(K)_{tors}=X_1(3,9)(\Q(\sqrt{-3}))_{tors},$$
for all quadratic extensions $K$ of $\Q(\sqrt{-3})$.

%Thus we examine the division polynomials of $X_1(3,9)$ over $\Q(\sqrt{-3})$, and examine whether there are any torsion points of $X_1(3,9)$ that are defined over some quadratic extension of $\Q(\sqrt{-3})$ but not over $\Q(\sqrt{-3})$.

%We compute and factor the division polynomials $\psi_p$, for the primes $p=2$ and $5\leq p\leq 97$. We only need to show that $\psi_p$ has no factors of degree $\leq 2$, and this can easily be done by showing that $\psi_p$ has no factors of degree $\leq 2$ over $\mathbb F_q$, where $q$ is a split prime in $\Q(\sqrt{-3})$.

%We obtain that none of the $\psi_p$ have a factor of degree 2, thus proving
%$$X_1(3,9)(K)_{tors}=X_1(3,9)(\Q(\sqrt{-3}))_{tors},$$
%for all quadratic extensions $K$ of $\Q(\sqrt{-3})$.

\end{proof}

\newtheorem{tm48}[tm]{Theorem}
\begin{tm48}
There are no exceptional pairs $(\Z/4\Z \oplus \Z/8\Z, K)$, where $K$ is a quartic field.
\label{t48}
\end{tm48}
\begin{proof}
We compute $X_1(4,8)(\Q(i))_{tors}\simeq \Z/2\Z \oplus \Z/4\Z$. By Lemma \ref{cml}, we need to check only the 2, 3 and 5-primary torsion. The polynomial $\psi_3$ is irreducible, hence there is no 3-torsion over any quadratic extension of $\Q(i)$.

The $5$-division polynomial $\psi_5$ has two degree 2 factors $t^2 + 1/5(-2i - 1)$ and $t^2 + 1/5(2i - 1)$, but we check that the torsion of $X_1(4,8)$ does not increase in the extensions of $\Q(i)$ generated by these polynomials.

We factor the $4$-division polynomial and obtain two factors of degree 2, $t^2 - 2t - 1$ and $t^2 + 2t - 1$, which generate the same extension, $\Q(i, \sqrt 2)$. We compute $X_1(4,8)(\Q(i, \sqrt 2))_{tors}\simeq \Z/4\Z \oplus \Z/4\Z$, so we have 8 additional torsion points. But by checking the curves generated by these points, we see that all the obtained curves are singular, hence all the points in $X_1(4,8)(\Q(i, \sqrt 2))_{tors}$ are cusps.

By factoring the $8$-division polynomial, we find that all the factors either correspond to $\psi_4$ or are of degree greater than 2.
\end{proof}

\newtheorem{tm66}[tm]{Theorem}
\begin{tm66}
There are no exceptional pairs $(\Z/6\Z \oplus \Z/6\Z, K)$, where $K$ is a quartic field.
\label{t66}
\end{tm66}
\begin{proof}
Note that, by the Weil pairing an elliptic curve with complete 3-torsion has to be defined over some extension of $\Q(\sqrt{-3})$.
We first note that $$X_1(6,6)(\Q(\sqrt{-3}))\simeq \Z/2\Z \oplus \Z/6\Z$$
and that all the points are cusps.
By Lemma \ref{cml}, we again need only to study the $p$-primary torsion for $p\leq 7$. As in the proofs of Theorems \ref{t39} and \ref{t48}, by examining the factorizations of $\psi_n$ for $n=3,4,5,7$ and 9 we see that
$$X_1(6,6)(K)_{tors}=X_1(6,6)(\Q(\sqrt{-3}))_{tors},$$
for all quadratic extensions $K$ of $\Q(\sqrt{-3})$.

\end{proof}

Part 3 of Theorem \ref{gltm} follows from Theorem \ref{t11}, \ref{t13}, \ref{t39}, \ref{t48} and \ref{t66}. Part 4 of Theorem \ref{gltm} is a trivial consequence of Faltings' theorem.

\textbf{Acknowledgements.}
The author was supported by the National Foundation for Science, Higher Education and Technological Development of the Republic of Croatia. Many thanks go to the anonymous referee for careful reading and many helpful comments that significantly improved the presentation of this paper.

\medskip

\small{DEPARTMENT OF MATHEMATICS, UNIVERSITY OF ZAGREB, BIJENI\v CKA CESTA 30, 10000 ZAGREB, CROATIA}\\

\emph{E-mail:} fnajman@math.hr


\begin{thebibliography}{1}

\bibitem{Baa}
H. Baaziz, \emph{Equations for the modular curve $X_1(N)$ and models of elliptic curves with torsion points}, Math. Comp. \textbf{79}  (2010), 2371--2386.

\bibitem{mag}
W.\ Bosma, J.~J.\ Cannon, C.\ Fieker, A.\ Steel (eds.), Handbook of Magma functions, Edition 2.17 (2011),

\bibitem{bc}
\'E. Brier, C. Clavier,
\emph{New families of ECM curves for Cunningham Numbers}. In: Proceedings of ANTS IX. LNCS \textbf{6197}, Springer, Heidelberg (2010), 96--109.

\bibitem{cre}
J. Cremona, Algorithms for Modular Elliptic Curves, 2nd ed. Cambridge University Press, Cambridge, 1997.

\bibitem{fal} G. Faltings, \emph{Endlichkeitss\"atze f\"ur abelsche Variet\"aten \"uber Zahlk\"orpern} Invent. Math. \textbf{73} (1983), 349--366.

\bibitem{jks}
D. Jeon, C.H. Kim, and A. Schweizer, \emph{On the torsion of elliptic curves over cubic number fields}, Acta Arith. \textbf{113} (2004), 291--301.

\bibitem{jkp}
D. Jeon, C.H. Kim, and E. Park, \emph{On the torsion of elliptic curves over quartic number fields}, J. London Math. Soc. (2) \textbf{74} (2006), 1--12.

\bibitem{jky4}
D. Jeon, C.H. Kim, Y. Lee,
\emph{Families of elliptic curves over quartic number fields with prescribed torsion subgroups}, Math. Comp. \textbf{80} (2011), 2395--2410.

\bibitem{kn}
S. Kamienny, F. Najman, \emph{Torsion groups of elliptic curves over quadratic fields}, Acta. Arith. \textbf{152} (2012), 291--305.

\bibitem{kub}
D.~S. Kubert, \emph{Universal bounds on the torsion of elliptic curves}, Proc. London. Math. Soc. \textbf{33} (1976), 193--237.

\bibitem{kna}
A. Knapp, Elliptic Curves, Princeton Univ. Press, 1992.

\bibitem{maz1}
B. Mazur, \emph{Modular curves and the Eisenstein ideal},  Inst. Hautes Études Sci. Publ. Math. \textbf{47} (1978),  33--186.

\bibitem{maz2}
B. Mazur, \emph{Rational isogenies of prime degree}, Invent. Math. \textbf{44} (1978), 129--162.

\bibitem{mer} L. Merel, \emph{Bornes pour la torsion des courbes elliptiques sur les corps de nombres} Invent. Math. \textbf{124} (1996), 437--449.

\bibitem{naj}
F. Najman, \emph{Torsion of elliptic curves over cubic fields}, J. Number Theory, \textbf{132} (2012), 26-36.

\bibitem{py}
D. Prasad and C.~S. Yogananda, \emph{Bounding the torsion in CM elliptic curves}, C. R. Math. Acad. Sci. Soc. R. Can. \textbf{23} (2001) 1--5.


\bibitem{rab}
F. P. Rabarison, \emph{Structure de torsion des courbes elliptiques sur les corps quadratiques}, Acta Arith. \textbf{144} (2010), 17--52.

\bibitem{ser}
J. P. Serre, \emph{Properti\'et\'es galoisiennes des points d'odre fini des courbes elliptiques}, Invent. Math. \textbf{15} (1972) 259--311.

\bibitem{sbg}
A. Silverberg, \emph{Torsion points on abelian varieties of CM-type}, Compositio Math. \textbf{68} (1988), 241--249.

\bibitem{sil}
J. Silverman, The arithmetic of elliptic curves, Grad. Text Math. 106, Springer, New York, 1986.

\bibitem{sag}
W.~A.\ Stein et~al., \emph{{S}age {M}athematics {S}oftware ({V}ersion
  4.7.1)}, The Sage Development Team, 2011, \url{http://www.sagemath.org}.

\bibitem{was}
L. Washington, Elliptic Curves. Number Theory and Cryptography, Discrete Mathematics and its Applications (Boca Raton), Chapman \& Hall/CRC, Boca Raton, 2003.

\bibitem{yan}
Y. Yang, \emph{Defining equations of modular curves}, Adv. Math. \textbf{204} (2006), 481.--508.

\end{thebibliography}
\end{document}